\documentclass[12pt,reqno]{amsart}
\usepackage{amssymb}

\usepackage{amscd}

\newcommand{\RNum}[1]{\uppercase\expandafter{\romannumeral #1\relax}}


\usepackage{graphicx}



\usepackage[T2A]{fontenc}
\usepackage[utf8]{inputenc}
\usepackage[russian,english]{babel}
\input{int.def}

\usepackage{mdwlist}

\usepackage{hyperref}
\usepackage{blindtext}
\usepackage{scrextend}

\usepackage{tikz}
\usetikzlibrary{matrix,arrows,calc}
\usepackage{tikz-cd}
\usepackage{graphicx}
\usepackage{caption}
\usepackage{subcaption}
\usepackage{wrapfig}

\usepackage{amsmath}

\usepackage{xcolor}
\usepackage{array}

\usepackage{comment}
\usepackage{enumitem}

\numberwithin{equation}{section}

\myitemmargin 
\baselineskip =15.0pt plus 2.5pt


\def\o{\omega}
\def\rp{\mathbb{RP}}

\def\der{\text{\normalfont{d}}}

\begin{document}

\myitemmargin

\title[Symplectic triangle inequality]%
{Symplectic triangle inequality}


\author{Vsevolod Shevchishin}
\address{Faculty of Mathematics and Computer Science\\
University of Warmia and Mazury\\
ul.~Słoneczna 54, 10-710 Olsztyn, Poland
}
\email{shevchishin@gmail.com}
\thanks{}

\author{Gleb Smirnov}
\address{ETH Zürich}
\curraddr{}
\email{gleb.smirnov@math.ethz.ch}
\thanks{}

\subjclass[2010]{Primary }

\date{}

\dedicatory{}

\begin{abstract}
We prove a non-squeezing result for Lagrangian embeddings of the real projective plane into blow-ups of the symplectic ball.
\end{abstract}

\maketitle



\setcounter{section}{0}

\section{Statement of the main result}
The problem we consider is whether or not one may find 
an embedded Lagrangian $\rp^2$ in the three-fold blow-up of the symplectic ball.
Let $(B,\o)$ be the symplectic ball with $\int_{B} \o^2 = 1$, and 
let $B_3(\mu_1,\mu_2,\mu_3)$ be $B$ blown-up three times; here $\mu_i > 0$ are the areas of the exceptional curves, which satisfy $1 - \mu_i - \mu_j > 0$.
Note that the positivity condition $1 - \sum_i \mu_i^2 > 0$ is 
automatically satisfied.

We will show that 
$B_3(\mu_1,\mu_2,\mu_3)$
admits an embedded Lagrangian $\rp^2$ if and only if
$\mu_i$ obey 
\[
\mu_i < \mu_j + \mu_k,
\]
so that the sum of the sizes of any two blow-ups must be greater than the 
size of the remaining blow-up. The existence of a Lagrangian $\rp^2$ in $B_3$ 
has been previously reported in \cite{BLW}, under the assumption that $\mu_i$ 
are equal to each other and sufficiently small.

Although it is immediate that there is no embedded Lagrangian $\rp^2$ in the symplectic ball $B$,
one may ask if there is one in the blow-up of $B$ or the two-fold blow-up of $B$. The answer to this question 
is negative as there is a topological obstruction to such an embedding; a result due to Audin \cite{Aud} says that if 
$L$ is an embedded Lagrangian $\rp^2$ then 
\[
[L]^2 = 1\ \text{mod}\,4.
\]
(The reader will recall here that 
the self-interestion number of mod 2 
classes has a lift to $\zz_4$ coefficients,
the Pontrjagin square.) It is easy to see that neither the blow-up of $B$ nor the two-fold blow-up has suitable homology classes.

There is no general method to find obstructions for Lagrangian embeddings into symplectic $4$-manifolds, though there are many results known. 
For instance, Li and Wu show (see \cite{LW}) there exists an embedded Lagrangian sphere in the two-fold blow-up of $B$ if and only if the sizes of the blow-ups are equal to each other.

Although one can always find an embedded Lagrangian torus in $B$, such an embedding must satisfy interesting symplectic constraints. We let $\alpha$ to denote the action form on $B$, $\text{\normalfont{d}}\,\alpha = \o$. If $T^2$ is a Lagrangian torus in $B$, then the restriction of $\alpha$ to $T^2$ is closed and, therefore, defines a class in $\sfh^1(T^2;\rr) \cong \rr^2$.
A classical result of Gromov says (see \cite{Gro}) that $[\alpha]$ never vanishes. 
In \cite{HO}, Hind and Opshtein established a certain bound on the size of $B$ in terms of $[\alpha] \in \sfh^1(T^2;\rr)$. 

It is shown by Nemirovski-Shevchishin (see \cite{N,Sh}) that there is no Lagrangian embedding of the Klein bottle into $B$.

\state Acknowledgements.  
We are indebted to Silvia Anjos and Rosa Sena-Dias for 
useful discussions. We also wish to thank Jeff Hicks for 
reading the manuscript and for helpful comments.

\section{Preliminaries}

\subsection{Symplectic rational blow-up}
For symplectic $4$-manifolds, the standard blow-down is performed by removing a neighbourhood of a symplectic sphere with 
self-inter\-section $-1$ and replacing the sphere with the standard symplectic $4$-ball. 
The \slsf{symplectic rational blow-down} involves replacing a neighbourhood of a symplectic $(-4)$-sphere 
with the symplectic rational homology ball which is the standard symplectic neighbourhood of $\rp^2$ in $T_{\rp^2}$. 
For details, see \cite{F-S, Sym-1}, where more general blow-downs are considered.

A different viewpoint comes from the symplectic sum surgery introduced in \cite{MW,Gm}. Consider two symplectic $4$-manifolds $(X_i,\o_i)$, $i=1,2$, which contain symplectic spheres $S_i$ with 
\[
[S_1]^2 = -[S_2]^2\quad \text{and}\quad \int_{S_1} \o_1 = \int_{S_2} \o_2.
\]
Let $\overline{X_i - S_i}$ be the manifold with boundary such that
$\overline{X_i - S_i} - Y_i$ is symplectomorphic to $(X_i - S_i, \o_i)$,
where $Y_i = \del(\overline{X_i - S_i})$ is diffeomorphic to a circle bundle over $S_i$.
The symplectic sum $X_1 \#_{S_1 = S_2} X_2$ is defined as $\overline{X_1 - S_1} \cup_{\varphi} \overline{X_2 - S_2}$, where $\varphi \colon Y_1 \to Y_2$ is an orientation-reversing diffeomorphism.

One may equip $X_1 \#_{S_1 = S_2} X_2$ with a symplectic structure $\o$ which agrees with $\o_i$ over $X_i - S_i$ and whose properties can be recovered from those of $\o_i$. For instance, 
\[
\int_{X_1 \#_{S_1 = S_2} X_2} \o^2 = \int_{X_1} \o_1^2 + \int_{X_2} \o_2^2.
\]
There are various descriptions of the symplectic sum available in the literature; the one in 
\cite{Sym-2} is particularly visual.

Let $(\wt{X},\o)$ be a symplectic $4$-manifold containing a symplectic $(-4)$-sphere $\Sigma$, and 
let $\o_0$ be the Fubini-Study symplectic form on $\cp^2$. One may perform 
the symplectic sum 
\begin{equation}\label{eq:split}
X := \wt{X} \#_{\Sigma = Q} \cp^2,
\end{equation}
where $Q \subset \cp^2$ is the quadric 
$Q = \left\{ z_0^2 + z_1^2 + z_2^2 = 0 \right\}$. 
Note that we need to scale $\o_0$ up such that
$$
\int_{\Sigma} \o = \int_{Q} \o_0.
$$
Note also that the complement of $Q$ in $\cp^2$ is a symplectic neighbourhood 
of the Lagrangian projective plane $\left\{ z_i = \bar{z}_i \right\}$, and the Lagrangian therefore embeds into $X$. 

Since a symplectic neighbourhood of an embedded Lagrangian $\rp^2$ is entirely standard, the rational blow-down surgery is reversible. Namely, whenever $X$ contains an embedded Lagrangian $L \cong \rp^2$,
there exists a positive sufficiently small $\varepsilon$ such that $X$ splits according to \eqref{eq:split} with $\int_{Q} \o_0 = 4\,\varepsilon$. 

We shall say that the manifold $\wt{X}$ in \eqref{eq:split} is the \slsf{symplectic rational blow-up} of $L$
in $X$. Then the value of $4\,\varepsilon$, which may be chosen arbitrary small, is called the \slsf{size of the rational blow-up}. See \cite{Kh-1,Kh-2} for a detailed study of symplectic rational blow-ups.

If $X$ is the rational blow-down of $\Sigma$ from $\wt{X}$, then 
\begin{equation}\label{eq:b}
b_1(X) = b_1(\wt{X}), \quad
b_2^{+}(X) = b_2^{+}(\wt{X}), \quad 
b_2^{-}(X) = b_2^{-}(\wt{X}) - 1.
\end{equation}
These equations follow from \cite{F-S}. We now discuss the 
relation between 
the intersection form of $X$ and that of $\wt{X}$ in detail.

\subsection{Lattice calculation.}\label{lattice} 
In this note a \slsf{lattice} is a free Abelian group $\Lambda\cong \zz^n$
equipped with a non-degenerate symmetric bilinear form $q_\Lambda: \Lambda \times \Lambda \to \zz$. 

Let $(X,\omega)$ be a  compact symplectic manifold, $L \cong \rp^2$ be a 
Lagrangian in $X$, and $(\wt{X},\ti\o)$ be the rational blow-up 
of $L$ in $X$. Denote by $\Sigma$ the resulting exceptional $(-4)$-sphere, by $\Lambda:=\sfh_2(X,\zz)/\mathrm{Tor}$ the $2$-homology lattice of $X$, 
and by $\wt\Lambda:= \sfh_2( \wt X,\zz) /\mathrm{Tor}$ the same lattice of $\wt X$.

\smallskip
Following \cite{BLW}, we describe 
the relation of $\wt\Lambda$ to $\Lambda$.
The intersection 
with $L \cong\rp^2$ defines a homomorphism 
$w_L \colon \Lambda \to \zz_2$. 
Denote by $\Lambda'$ the kernel of this homomorphism. 
This is a sublattice of $\Lambda$ of index $2$. 

The elements of $\Lambda'$ are represented by oriented surfaces in $X$ having vanishing $\zz_2$-intersection index with $L$. 
By placing the surface $Y$ in generic position we obtain an even number of transverse intersection points of $Y$ with $L$. The intersections points can easily be made to disappear, 
by cutting from $Y$ a small neighbourhood of each intersection point and 
connecting the boundaries by tubes. If desired, the surgery can be done 
in such a way that the obtained surface remains orientable, see \slsf{Lemma 4.10} 
in \cite{BLW}.

We therefore conclude that $\Lambda'$ is the $2$-homology lattice of $X\bs L$.
Since there exists a natural diffeomorphism $X\bs L  \cong \wt X \bs \Sigma$, we obtain a natural embedding $\Lambda'\subset \wt \Lambda$. The image of the latter will be denoted by $\wt\Lambda'$.

On the other hand, the homology class of $\Sigma$ generates the sublattice $\zz\lan[\Sigma]\ran \subset \wt \Lambda$ of rank $1$. 
In a similar vein as above one shows that the orthogonal sublattice $[\Sigma]^\perp$ is generated by oriented surfaces disjoint from $\Sigma$, and that sublattice is canonically identified with $\wt\Lambda'$.
If $S$ is an oriented embedded surface in $X$ such that $[S] \in [\Sigma]^\perp$, 
then one constructs a representative of $[S]$ that is disjoint from $\Sigma$ as follows.
Arrange $S$ to be transverse to $\Sigma$ so that they intersect each other in finitely many points 
$Q_1,\ldots,Q_k$.
Pick a pair of points $Q_1,Q_2$ of opposite signs; we want to get rid of them.
Let $\Gamma_1$ and $\Gamma_2$ be small circles in $S$ going around the points $Q_1$ and $Q_2$, respectively.
Pick a path $\gamma \subset \Sigma$ from $Q_1$ to $Q_2$. 
Then, using a thin tube following the chosen path, we can connect $\Gamma_1$ to $\Gamma_2$. 
The intersections $Q_1$ and $Q_2$ have now been eliminated. The number of positive points $Q_i$ must be equal to the number of negative $Q_i$, or $[\Sigma] \cdot [S]$ would not have vanished. 
So pick another pair of points, find a path between them, eliminate, and so on till we run out of intersection point.

Thus the sum $\wt\Lambda'\oplus \zz\lan[\Sigma]\ran$ is orthogonal, and this is a sublattice in $\wt\Lambda$ of finite index. 

The index of $\big[\wt \Lambda: \wt\Lambda'\oplus \zz\lan[\Sigma]\ran \big]$ is the square root of the discriminant of the lattice $\wt\Lambda'\oplus \zz\lan[\Sigma]\ran$.
Recall that the \slsf{discriminant} of a lattice is the absolute value of the Gram matrix of the lattice with respect to any basis. 
Since the sum  $\wt\Lambda'\oplus \zz\lan[\Sigma]\ran$ is orthogonal, this discriminant is the product
of the discriminants of $\wt\Lambda'$ and $\zz\lan[\Sigma]\ran$. 

The first discriminant is $4=2^2$ since $\Lambda' \cong \wt\Lambda'$ has index $2$ in the unimodular lattice $\Lambda$. In the case of $\zz\lan[\Sigma]\ran$
the discriminant is $|\Sigma^2|=|-4|=4$. It follows that discriminant of the lattice $\wt\Lambda'\oplus \zz\lan[\Sigma]\ran$ is $4\cdot4=16$, and so the index is $4$. In particular, for every $\lambda\in\wt\Lambda$ the multiple $4\,\lambda$ lies in $\Lambda'\oplus \zz\lan[\Sigma]\ran$.
 \smallskip%

We sum up our previous considerations as follows:
\begin{lem} Let $(X,\omega)$ be a closed symplectic $4$-manifold, and $L\subset X$ be Lagrangian real projective plane in $X$.
Let $(\wt X, \wt\omega)$ be the symplectic rational blow-up of $L$ in $X$, and $\Sigma$ the arising $(-4)$-sphere. Denote by $\Lambda$ and $\wt\Lambda$ the integer lattices of $X$ and resp.\ $\wt X$. Let $\Lambda'$ be the sublattice of vectors $\lambda \in \Lambda$ having vanishing $\zz_2$-intersection with $L$.

Then the lattice $\wt \Lambda$ admits a sublattice naturally isomorphic to $\Lambda' \oplus \zz\lan[\Sigma]\ran$, and the quotient group is $\zz_4$. (This follows from unimodularity of $\wt\Lambda$.)

Since the rational blow-up surgery does not affect the symplectic form $\o$ away from some tubular neighbourhood of $L$, we see that the Chern class $c_1(\wt X)$ coincides with the class $c_1(X)$ on the sublattice $\Lambda'$, and so do the classes $[\o]$ and $[\wt{\o}]$.
\end{lem}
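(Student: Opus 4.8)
The statement has three parts, and the first of them — that $M:=\wt\Lambda'\oplus\zz\lan[\Sigma]\ran$ is an orthogonal sublattice of $\wt\Lambda$ of index $4$, with $\wt\Lambda'\cong\Lambda'$ — is exactly what the preceding discussion establishes: the orthogonality of the two summands, the discriminant product $4\cdot 4=16$ (hence index $\sqrt{16}=4$), and the divisibility fact $4\lambda\in M$ for every $\lambda\in\wt\Lambda$. So my plan is to prove the two remaining assertions, namely that the quotient $\wt\Lambda/M$ is the \emph{cyclic} group $\zz_4$, and that $c_1$ and the symplectic class are unchanged on $\Lambda'$.

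For the quotient, the key point is that $[\Sigma]$ is primitive in $\wt\Lambda$. Once this is known, unimodularity of $\wt\Lambda$ furnishes a class $e$ with $e\cdot[\Sigma]=1$, so the homomorphism $\phi\colon\wt\Lambda\to\zz_4$, $\phi(\lambda)=\lambda\cdot[\Sigma]\bmod 4$, is surjective. Since $\Lambda'$ consists of classes disjoint from $\Sigma$ and $[\Sigma]^2=-4\equiv 0$, the map $\phi$ vanishes on $M$ and descends to a surjection $\wt\Lambda/M\to\zz_4$ between two groups of order $4$; hence it is an isomorphism, which is the assertion.

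It therefore remains to rule out $[\Sigma]$ being divisible, and I expect this to be the crux of the argument. Writing $[\Sigma]=d\,\sigma$ with $\sigma$ primitive, the constraint $\sigma^2=-4/d^2\in\zz$ forces $d\in\{1,2\}$. If $d=2$, then $\sigma$ is a primitive class with $\sigma^2=-1$; in a unimodular lattice such a vector splits off a $\langle-1\rangle$ summand, so that $\sigma^\perp$ is again unimodular, of discriminant $1$. But $\sigma^\perp=[\Sigma]^\perp=\wt\Lambda'\cong\Lambda'$ has discriminant $4$, and this contradiction excludes $d=2$. The subtle point here is precisely to notice that the alternative quotient $\zz_2\oplus\zz_2$ would correspond to $[\Sigma]$ being twice a $(-1)$-class, which the discriminant of $\Lambda'$ forbids; this is the sense in which cyclicity ``follows from unimodularity''. (Equivalently, the surgery locus is the lens space $L(4,1)$, whose cyclic $\sfh_1\cong\zz_4$ already forces the quotient to be cyclic.)

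Finally, for the Chern and symplectic classes: every class in $\Lambda'$ is represented by a surface lying in $X\bs L\cong\wt X\bs\Sigma$, whereas the rational blow-up modifies the data only inside a tubular neighbourhood of $L$ (resp.\ of $\Sigma$). Choosing $\o=\wt\o$ and compatible almost complex structures on $X$ and $\wt X$ that agree outside this neighbourhood, the integrals of the symplectic class and the evaluations of $c_1$ against such surfaces reduce to the same computation in both manifolds; hence $[\wt\o]$ and $c_1(\wt X)$ restrict on $\Lambda'$ to $[\o]$ and $c_1(X)$, as claimed.
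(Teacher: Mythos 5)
Your proposal is correct, and its skeleton matches the paper's: the paper's ``proof'' of the first assertion is precisely the discussion preceding the lemma (the identification $\Lambda'\cong\wt\Lambda'=[\Sigma]^\perp$ via $X\bs L\cong \wt X\bs\Sigma$, orthogonality of the sum $\wt\Lambda'\oplus\zz\lan[\Sigma]\ran$, and the discriminant count $4\cdot 4=16$ giving index $4$), and its justification of the final assertion about $c_1$ and $[\o]$ is the same locality remark you give. Where you genuinely add something is the cyclicity of the quotient: the paper nowhere distinguishes $\zz_4$ from $\zz_2\oplus\zz_2$ --- it only asserts ``(This follows from unimodularity of $\wt\Lambda$)'', and the one consequence it actually draws, namely $4\lambda\in\Lambda'\oplus\zz\lan[\Sigma]\ran$ for all $\lambda\in\wt\Lambda$, holds for either group of order $4$. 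Your two-step argument supplies exactly the missing content, and it is correct: first, $[\Sigma]$ is primitive, since $[\Sigma]=2\sigma$ with $\sigma^2=-1$ would split $\wt\Lambda=\zz\lan\sigma\ran\oplus\sigma^\perp$ orthogonally, making $\sigma^\perp=[\Sigma]^\perp=\wt\Lambda'$ unimodular and contradicting its discriminant $4$; second, primitivity plus unimodularity yields $e$ with $e\cdot[\Sigma]=1$, so $\lambda\mapsto\lambda\cdot[\Sigma]\bmod 4$ kills $\wt\Lambda'\oplus\zz\lan[\Sigma]\ran$ (as $[\Sigma]^2=-4$) and descends to a surjection of the order-$4$ quotient onto $\zz_4$, hence an isomorphism. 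Your parenthetical alternative is also sound: by Mayer--Vietoris the quotient injects into $\sfh_1$ of the gluing locus $L(4,1)\cong\zz_4$, which forces cyclicity directly. In the paper's later application the lattices of $X_3$ and $\wt{X}_4$ are written out explicitly, so nothing downstream hinges on cyclicity; still, your version is the complete proof of the lemma as stated, whereas the paper's is complete only for the index-$4$ and locality claims.
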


\section{The inequalities}

We define a \slsf{symplectic ball} $B_0$ as the round ball of radius $r$ in $\rr^4=\cc^2$ equipped with the standard symplectic structure
\[
\o_{0}:= \frac{\cpi}{2} \big( \der z_1 \wedge \der\bar{z}_1 +
\der z_2 \wedge \der\bar{z}_2 \big)
= \der x_1 \wedge \der y_1 + \der x_2 \wedge \der y_2.
\]
In this case we say that the quantity $\pi r^2$ is the \slsf{size of the ball} $B_0$. This is the $\omega_0$-area of the disc $\{ (x_1,y_1;0,0)\;:\; x_1^2 +y_1^2 \le1\}$ in $B$.  

Take the symplectic ball $(B_0,\o_{0})$ of size $1$. Inside $B_0$ take three disjoint symplectic balls $B(x_i,\mu_i)$, $i = 1,2,3$, of sizes $\mu_i > 0$ and centers $x_i$. By $B_3(\mu_1, \mu_2, \mu_3)$ we denote the three-fold blow-up of $B_0$ at $x_i$, and by $E_i \subset B_3(\mu_1, \mu_2, \mu_3)$ we denote the arising exceptional spheres.

\subsection{Construction of Lagrangian $\rp^2$’s in a
triply blown-up ball.} For this discussion we follow closely \slsf{§ 4.3.1} in \cite{BLW}. 

Take the symplectic ball $(B_0, \o_{0})$ of size $1$. 
Inside $B_0$ take a symplectic ball $B(\wt{x}_0,\wt\mu_0)$ of size 
$\wt\mu_0 > 0$ and center $\wt{x}_0$. 
Let $(B_1,\wt\omega_1)$ be the symplectic blow-up of the ball $(B_0,\omega_{0})$ at $\wt{x}_0$ of size $\wt\mu_0$, using the ball $B(\wt{x}_0,\wt\mu_0)$. 
Denote by $\wt{E}_0$ the arising exceptional sphere. 
Then 
$\int_{\wt{E}_0}\wt\omega_1=\wt\mu_0$.

Take three distinct points $\wt{x}_1, \wt{x}_2, \wt{x}_3$ on $\wt{E}_0$. 
Then there exist disjoint symplectic balls $B(\wt{x}_i,\wt\mu_i)$ of some sizes $\wt{\mu}_i > 0$ such that each intersection 
$\wt{E}_0 \cap B(\wt{x}_i,\wt{\mu}_i)$ is a disc $D(\wt{x}_i,\wt{\mu}_i)$ of area $\wt{\mu}_i$. 
Notice that we get 
$\wt{\mu}_1 + \wt{\mu}_2 + \wt{\mu}_3 < \wt{\mu}_0$. 

Let $(B_4,\wt{\omega}_4)$ be the three-fold symplectic blow-up of the domain $(B_1, \wt{\omega}_1)$ at the points $\wt{x}_i$ using the balls $B(\wt{x}_i,\wt{\mu}_i)$. 
Denote by $\wt{E}_1, \wt{E}_2, \wt{E}_3$ the arising exceptional spheres. 
Then $\int_{\wt{E}_i}\wt{\omega}_4 = \wt{\mu}_i$. 
The proper preimage of $\wt{E}_0$ in $(B_4,\wt{\omega}_4)$ is a symplectic sphere $\Sigma$ of homology class $[\Sigma] = 
[\wt{E}_0] - ([\wt{E}_1] + [\wt{E}_2] +[\wt{E}_3])$ and of area 
$\int_{\Sigma} \wt{\omega}_4 
= \wt{\mu}_0 - (\wt{\mu}_1 + \wt{\mu}_2 + \wt{\mu}_3)$.

Recall that there exists a symplectic embedding $(B_0,\omega_0) \subset (\cp^2,\omega_{st})$ such that the complement of $B_0$ in $\cp^2$ is a projective line $H$. 
Here $\o_{st}$ is the Fubini-Study form on $\cp^2$ normalized by $\int_{\cp^2} \o_{st}^2 = 1$.
A classical result of Lalonde-McDuff \cite{LaMc} says that if a symplectic $4$-manifold $X$ contains an embedded symplectic sphere of non-negative self-intersection number, then $X$ is either rational or ruled (not necessarily minimal.) If, moreover, there is an embedded sphere of positive self-intersection number, then $X$ is either $S^2 \times S^2$ or is $\cp^2$ blown-up a number of times.
This implies that every symplectic domain for which a collar neighbourhood 
of its boundary is symplectomorphic to that of $B_0$ is obtained from $B_0$ by finite sequence of symplectic blow-ups. 
Consequently, the rational blow-up of a Lagrangian projective plane in 
$B_3(\mu_1,\mu_2,\mu_2)$ is $(B_4,\wt{\omega}_4)$ (as the rational blowing-up surgery is performed away from $\del B_3$.)

\subsubsection{Necessity.}
Let us make the homology lattice comparison of $B_3$ and $B_4$.
For this purpose we use an embedding $B_0$ in $\cp^2$ for which 
$\cp^2 = B \sqcup H$, where $H \subset \cp^2$ is a projective line.
We use the notation $X_3,\wt{X}_4$ for the $\cp^2$ blown-up $3$ or resp.\ $4$ times. 
We obtain the lattices
\[
\Lambda_3 := \sfh_2(X_3,\zz) 
= \zz \lan\; [H],\, [E_1],\,[E_2],\,[E_3] \;\ran,
\]
\[
\Lambda_4 := \sfh_2(\wt{X}_4,\zz) 
= \zz \lan\; [H],\, [\wt{E}_0],\, [\wt{E}_1],\,[\wt{E}_2],\,
[\wt{E}_3]\; \ran,
\]
where $[H]$ denotes the class of the line in $\cp^2$. 
In this notation we have 
\[
[L]_{\zz_2} \equiv [E_1] +[E_2]+[E_3] \mod 2
\]
in $X_3$, and 
\begin{equation} \label{Sigma=E}
[\Sigma]= [\wt{E}_0]-(\, [\wt{E}_1]+[\wt{E}_2]+ [\wt{E}_3]\,)
\end{equation}
in $\wt{X}_4$. 
The latter follows from the equations
\[
[\Sigma] \cdot [H] = 0,\quad [\Sigma]^2 = -4,\quad c_1(\wt{X}_4) \cdot [\Sigma] = -2.
\]
Indeed, the orthogonality condition $[\Sigma] \cdot [H] = 0$ implies that 
$[\Sigma]$ can be written in the form
\[
[\Sigma] = l_0[\wt{E}_0] + l_1[\wt{E}_1] + l_2[\wt{E}_2] + l_3[\wt{E}_3].
\]
Since $[\Sigma]^2 = -4$, it follows that $l^2_i = \pm 1$. 
But only one of $l_i$ can be positive, or $c_1(\wt{X}_4) \cdot [\Sigma]$ would not 
be equal to $(-2)$. 
We conclude that $[\Sigma]$ is unique up to permutation of $[\wt{E}_i],\ i = 0,\ldots,3$.

Further, the Chern classes of $X_3$ and $\wt{X}_4$ are
\[
c_1(X_3) = 3[H] - (\, [E_1]+[E_2]+ [E_3]\,),
\qquad
c_1(\wt{X}_4) = 3[H] - (\,[\wt{E}_0]+ [\wt{E}_1]+[\wt{E}_2]+ [\wt{E}_3]\,).
\]
Next, recall that we have the sublattice $\Lambda'_3$ consisting of vectors $\lambda\in \Lambda_3$ such that $\lambda\cdot [L]\equiv 0 \mod 2$. The sublattice $\Lambda'_3$ is generated by $[H]$ and the classes $[E_i]-[E_j]$, $2[E_i]$. The latter are primitive in $\wt{\Lambda}_4$, orthogonal to $[H]$, and characterised by the properties 
\[
([E_i]-[E_j])^2=-2,\quad c_1\cdot ([E_i]-[E_j])=0, \quad
(2[E_i])^2=-4,\quad c_1\cdot (2[E_i])=2.
\]
Let us consider the sublattice $\wt{\Lambda}'_4 \subset \wt{\Lambda}_4$ consisting of the vectors $\lambda\in \Lambda_4$ orthogonal to $[\Sigma]$ and find the classes with the properties above in $\wt{\Lambda}'_4$. The orthogonality to $[H]$ means that we seek vectors of the form 
\begin{equation} \label{la=sumEi}
    \lambda= k_0[\wt{E}_0] + k_1[\wt{E}_1] +k_2[\wt{E}_2] + k_3[\wt{E}_3]. 
\end{equation}

The condition $\lambda^2=-2$ means that two of the coefficients $k_0,\ldots,k_3$ are $0$ and two of them $\pm1$. The orthogonality to $[\Sigma]$ leaves two possibilities:
either $[\wt{E}_i]-[\wt{E}_j]$ with $i\ne j \in \{1,2,3\}$ or $\pm(\,[\wt{E}_0] + [\wt{E}_i]\,)$ with $i=1,2,3$. 
The orthogonality to $c_1$ excludes the latter possibility. The classes with $\lambda^2=-4$ are either $2[\wt{E}_0],2[\wt{E}_i]$, or with coefficients $k_i=\pm1$ in \eqref{la=sumEi}. The orthogonality to $[\Sigma]$ excludes double classes $2[\wt{E}_0],2[\wt{E}_i]$ and says that two of the coefficients $k_0,\ldots,k_3$ are the same as for $[\Sigma]$ and two of the opposite sign. Finally, the condition $c_1\cdot \lambda=2$ says that one of the coefficients $k_0,\ldots,k_3$ is $-1$ and three other are $+1$. So our classes $\lambda$ with $\lambda^2=-4$ are
\[
[\wt{E}_0]+ [\wt{E}_1]+ [\wt{E}_2] -[\wt{E}_3],\quad
[\wt{E}_0]+ [\wt{E}_1]- [\wt{E}_2] +[\wt{E}_3],\quad
[\wt{E}_0] -[\wt{E}_1]+ [\wt{E}_2] +[\wt{E}_3].
\]
Notice that the symmetric group $\sym_3$ permuting the classes in the sets  $\{ [E_1], [E_2], [E_3] \}$ and $\{ [\wt{E}_1], [\wt{E}_2], [\wt{E}_3] \}$
acts in compatible way on the generating classes of the lattices $\Lambda'_3$ and $\wt{\Lambda}'_4$. 

The last property we need is 
\[
\begin{split}
& [E_i] -[E_j] = \tfrac{1}{2}\,(\,2[E_i] -2[E_j]\,) \qquad
\text{in } \Lambda'_3,
\\
& [\wt{E}_1] -[\wt{E}_2] = \tfrac{1}{2}\,\big(\,
([\wt{E}_0]+ [\wt{E}_1]- [\wt{E}_2] +[\wt{E}_3])
- ([\wt{E}_0]- [\wt{E}_1] +[\wt{E}_2] +[\wt{E}_3])\,\big) \qquad
\text{in } \wt{\Lambda}'_4
\end{split}
\]
and similar for $[\wt{E}_1] -[\wt{E}_3]$, $[\wt{E}_2] -[\wt{E}_3]$.

Summing up we conclude:
\begin{lem} There is a unique (up to $\sym_3$) lattice isomorphism $\Lambda'_3 \to \wt{\Lambda}'_4$ which sends
\begin{equation}  \label{E-corresp}
    \begin{split}
& 2[E_1] \mapsto [\wt{E}_0] -[\wt{E}_1] +[\wt{E}_2] +[\wt{E}_3], \quad
   2[E_2] \mapsto [\wt{E}_0] +[\wt{E}_1] -[\wt{E}_2] +[\wt{E}_3], \\[2pt]
& 2[E_3] \mapsto [\wt{E}_0] +[\wt{E}_1] +[\wt{E}_2] -[\wt{E}_3]. 
\end{split}
\end{equation}
On the other hand, those lattice isomorphisms which preserve $c_1$ satisfy \eqref{E-corresp}.
\end{lem}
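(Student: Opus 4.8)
My plan is to write the isomorphism down explicitly, confirm bijectivity by a single determinant, and then establish the converse by characterising all the relevant classes intrinsically.

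First I would fix convenient bases. Since $\lambda\cdot[L]\equiv 0\pmod 2$ amounts to $b_1+b_2+b_3$ being even for $\lambda=a[H]+\sum_i b_i[E_i]$, the lattice $\Lambda'_3$ has $\zz$-basis $[H],\,[E_1]-[E_2],\,[E_2]-[E_3],\,2[E_3]$. Likewise, solving $\lambda\cdot[\Sigma]=0$ with $[\Sigma]$ as in \eqref{Sigma=E} gives $k_0=k_1+k_2+k_3$, so $\wt\Lambda'_4$ has $\zz$-basis $[H],\,G_1,G_2,G_3$ with $G_i:=[\wt{E}_0]+[\wt{E}_i]$. I would then define $\phi\colon\Lambda'_3\to\wt\Lambda'_4$ by $[H]\mapsto[H]$ together with the rule \eqref{E-corresp}. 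The only point to check for well-definedness is that the forced images of the half-classes are integral, and indeed $\tfrac12(F_i-F_j)=[\wt{E}_j]-[\wt{E}_i]\in\wt\Lambda'_4$, in agreement with the displayed identity preceding the statement. Evaluating on the basis then shows that $\phi$ preserves the intersection form (each image $F_i$ has square $-4$ and $F_i\cdot F_j=0$, matching $(2[E_i])^2=-4$ and $2[E_i]\cdot 2[E_j]=0$, and $[H]$ stays orthogonal to the negative part) and the Chern functional ($c_1\cdot[H]=3$ on both sides and $c_1\cdot 2[E_i]=2=c_1\cdot F_i$); by bilinearity this suffices, since $[H]$ and the $2[E_i]$ span the lattice over $\qq$.

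Next, to upgrade $\phi$ from an isometric embedding to an isomorphism, I would read off the images of the four basis vectors in the basis $[H],G_1,G_2,G_3$: one finds $[E_1]-[E_2]\mapsto G_2-G_1$, $[E_2]-[E_3]\mapsto G_3-G_2$, and $2[E_3]\mapsto F_3=G_1+G_2-G_3$, while $[H]\mapsto[H]$. The transition matrix on the negative part is
\[
\begin{pmatrix} -1 & 0 & 1 \\ 1 & -1 & 1 \\ 0 & 1 & -1 \end{pmatrix},
\]
whose determinant is $+1$; hence $\phi$ is unimodular and therefore a lattice isomorphism. As a consistency check, both lattices have discriminant $4$: for $\Lambda'_3$ because it has index $2$ in the unimodular $\Lambda_3$, and for $\wt\Lambda'_4=[\Sigma]^\perp$ because $[\Sigma]$ is primitive with $[\Sigma]^2=-4$.

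Finally, for the converse, let $\psi\colon\Lambda'_3\to\wt\Lambda'_4$ be any lattice isomorphism preserving $c_1$. The key point is that $[H]$ is pinned down intrinsically: in either lattice, $[H]$ is the unique vector $v$ with $v^2=1$ and $c_1\cdot v=3$. Writing $v=a[H]+\cdots$, the two conditions force $3(a-1)^2\le (a-1)(a+1)$, so $a=1$, the borderline integer value $a=2$ being excluded by the parity constraint on $\Lambda'_3$ (resp.\ by the membership relation $k_0=k_1+k_2+k_3$, which forces $a$ odd, on $\wt\Lambda'_4$). Hence $\psi([H])=[H]$ and $\psi$ preserves $[H]^\perp$. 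Inside $[H]^\perp$, the enumeration already carried out before the statement shows that the vectors with $v^2=-4$ and $c_1\cdot v=2$ are exactly $2[E_1],2[E_2],2[E_3]$ on one side and exactly $F_1,F_2,F_3$ on the other (three each, the sign being fixed by $c_1\cdot v=+2$). Since $\psi$ restricts to a bijection between these two three-element sets, there is $\sigma\in\sym_3$ with $\psi(2[E_i])=F_{\sigma(i)}$; as $[H]$ and the $2[E_i]$ generate a finite-index sublattice and $\wt\Lambda'_4$ is torsion-free, these values determine $\psi$, which is thus $\phi$ precomposed with $\sigma$. This yields both the uniqueness up to $\sym_3$ and the asserted \eqref{E-corresp}.

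I expect the converse to be the main obstacle. Existence and bijectivity collapse to one determinant once the bases are chosen, but the converse needs (i) the intrinsic characterisation of $[H]$, where one must verify that the borderline case of the Cauchy--Schwarz bound is killed by the parity/membership constraint, and (ii) a genuinely exhaustive enumeration of the square-$(-4)$, $c_1=2$ classes inside $[H]^\perp$; in the full lattice there are extra such classes (for instance $2[H]-2[E_1]-2[E_2]$), so the orthogonality to $[H]$ is essential and cannot be dropped.
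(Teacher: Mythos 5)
Your proof is correct, and at its core it runs on the same engine as the paper's implicit argument (the discussion preceding the lemma \emph{is} the paper's proof): namely, the enumeration of classes orthogonal to $[H]$ with prescribed square and $c_1$-value (square $-2$, $c_1=0$ for the differences; square $-4$, $c_1\cdot\lambda=2$ for the three classes appearing in \eqref{E-corresp}), together with the half-difference identity that makes the extension of \eqref{E-corresp} to all of $\Lambda'_3$ well defined. The differences are in completeness, and they are in your favour. For existence, you verify bijectivity honestly, by a determinant in explicit bases (alternatively: both lattices have discriminant $4$, so an injective isometry between them must be onto); the paper never checks surjectivity. More substantively, for the converse --- the second sentence of the lemma --- the paper's enumeration \emph{presupposes} orthogonality to $[H]$, so to apply it to an arbitrary $c_1$-preserving isomorphism one must first know that such an isomorphism carries $[H]$ to $[H]$. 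This step is simply absent from the paper, and your intrinsic characterization of $[H]$ as the unique class with $v^2=1$ and $c_1\cdot v=3$ (Cauchy--Schwarz plus the parity, resp.\ membership, constraint that kills $2[H]-[E_1]-[E_2]-[E_3]$ and its analogues) supplies exactly the missing ingredient.

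Two small corrections. First, the inequality $3(a-1)^2\le(a-1)(a+1)$ is the rank-three Cauchy--Schwarz bound and is literally valid only in $\Lambda'_3$; in $\wt{\Lambda}'_4$ the negative-definite part has rank four, so the bound reads $9(a-1)^2\le 4(a^2-1)$, giving $a\le 13/5$. The integer conclusion $a\in\{1,2\}$, with $a=2$ excluded because membership in $\wt{\Lambda}'_4$ together with $c_1\cdot v=3$ forces $a$ odd, is unchanged. Second, as stated --- without imposing $c_1$-preservation --- an isomorphism satisfying \eqref{E-corresp} is unique only up to $\sym_3$ \emph{and} the sign of $[H]$: since $\wt{\Lambda}'_4$ splits as the orthogonal sum of $\zz\lan [H]\ran$ and the part orthogonal to $[H]$, composing your $\phi$ with the isometry that negates $[H]$ and fixes $[H]^\perp$ still satisfies \eqref{E-corresp}. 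This is an imprecision of the lemma itself rather than of your argument; your rigidity step pins down $\psi$ completely once $\psi([H])=[H]$ is known, which is guaranteed in the $c_1$-preserving case relevant to the paper's application.
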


Now we can give a proof of the triangle inequality. Let $(B_3, \omega_3)$ be a symplectic ball blown-up triply, and $E_1,E_2,E_3$ the corresponding exceptional spheres. Denote by $\mu_i:= \int_{E_i} \omega_3$ the periods of the symplectic form so $(B_3, \omega_3)$ is $B_3(\mu_1,\mu_2,\mu_3)$.

Assume that there exists a Lagrangian $L \cong \rp^2$ in $(B_3,\omega_3)$. Let $(B_4,\wt{\omega}_4)$ be the symplectic rational blow-up of $L$ of size $\varepsilon>0$. Introduce the homology classes in $\sfh_2(B_4,\zz)$ according to the formulas \eqref{Sigma=E} and \eqref{E-corresp}. Set $\wt{\mu}_i := \int_{\wt{E}_i} \wt\omega_4, i = 0,\ldots,3$.
We have the relations:
\[
\begin{split}
& \wt{\mu}_0 - (\wt{\mu}_1 + \wt{\mu}_2 +\wt{\mu}_3) = 4\varepsilon \\
& \wt{\mu}_0 - \wt{\mu}_1 + \wt{\mu}_2 +\wt{\mu}_3 = 2\mu_1 \qquad
\wt{\mu}_0 +\wt{\mu}_1 - \wt{\mu}_2 +\wt{\mu}_3 = 2\mu_2 \qquad
\wt{\mu}_0 +\wt{\mu}_1 + \wt{\mu}_2 -\wt{\mu}_3 = 2\mu_3 
\end{split}
\]
or resolved in $\wt{\mu}_i$
\begin{equation} \label{mu2mu'}
\begin{split}
& \wt{\mu}_0 = \frac{\mu_1 + \mu_2 +\mu_3}{2}  + \varepsilon 
\\[2pt]
& \wt{\mu}_1  = \frac{\mu_2 + \mu_3 -\mu_1}{2} - \varepsilon 
\qquad
\wt{\mu}_2 = \frac{\mu_1 + \mu_3 -\mu_2}{2} -\varepsilon 
\qquad
\wt{\mu}_3 = \frac{\mu_1 + \mu_2 -\mu_3}{2} -\varepsilon.
\end{split}
\end{equation}
The latter formulas not only demonstrate the symplectic triangle inequality, but also give the upper bound on the maximal possible size of the rational symplectic blow-up.

\subsubsection{Sufficiency.}
We let $\omega_3$ to denote the symplectic form on $B_3(\mu_1,\mu_2,\mu_3)$. 
Let us extend $\omega_3$ to a symplectic form on $X_3$, the three-fold blow-up of $\cp^2$. 
We use the same notation $\omega_3$ for the extension; we get 
\[\textstyle
[\omega_3] = [H] - \sum_i \mu_i [E_i].
\]

We assume $\omega_3$ to satisfy:
\begin{enumerate}
\item \label{NM1}
$[\omega_3]^2 = 1 - \sum_i \mu_i^2 >0$ (``positive volume'');
\item \label{NM2}
$\mu_i>0$ and $\mu_i+\mu_j<1$ (``effectivity of exceptional curves''); 
\item \label{NM.RP2}
$\mu_i+\mu_j> \mu_k$, the latter is the symplectic triangle inequality.  
\end{enumerate}
Let us show that under the additional condition \eqref{NM.RP2} there exist a Lagrangian $L\cong \rp^2$ in $(X_3,\omega_3)$ disjoint from the line $H$. For this purpose we fix some sufficiently small $\varepsilon>0$ and define new periods $\wt{\mu}_0,\ldots,\wt{\mu}_3$ by \eqref{mu2mu'} so that they are positive and satisfy
\begin{equation}\label{mu'ineq}
1 - \sum_{i = 0}^{3} \wt\mu_i^2 > 0,
\quad
\wt\mu_0 - (\wt\mu_1 + \wt\mu_2 + \wt\mu_3) > 0,
\quad
1 - \wt\mu_0 - \wt\mu_i > 0,
\ i = 1,2,3.
\end{equation}
Now, consider a line $H$ in $\cp^2$ and a point $\wt{x}_0 \in \cp^2$ that does not lie on $H$. 
Let $\wt{X}_1$ be the blow-up of $\cp^2$ at $\wt{x}_0$, and let $\wt{E}_0$ be the arising exceptional curve.  
After that, take three distinct points $\wt{x}_1,\wt{x}_2,\wt{x}_3$ on $\wt{E}_0$ and blow-up $\wt{X}_1$ at them. 
Denote by $\wt{X}_4$ the resulting complex surface and by $\wt{E}_i, i =1,2,3$ the corresponding exceptional complex curves. The proper preimage of $\wt{E}_0$ in $\wt{X}_4$, which is disjoint from $H$, is a 
rational $(-4)$-curve $\Sigma$ in the homology class $[\Sigma]=[\wt{E}_0] - (\, [\wt{E}_1] + [\wt{E}_2] + [\wt{E}_3]\,)$. 

At this point we use the Nakai-Moishezon criterion and conclude that there exists a Kähler form $\wt{\omega}_4$ with the periods $\int_H \wt{\omega}_4=1$ and $\wt{\mu}_i=\int_{\wt{E}_i} \wt{\omega}_4$. Since $\Sigma$ is an $\wt{\omega}_4$-symplectic sphere of the area $4\epsi$, we make the rational blow-down of $\Sigma$ from $\wt{X}_4$ and obtain the manifold $X_3$ with the desired symplectic form $\omega_3$ on $X_3$ (with the prescribed periods and with an $\omega_3$-Lagrangian $L\cong \rp^2$ in $X_3$.) 

We will now give more details about applying the 
Nakai-Moishezon criterion in this particular situation.
We let $\calk(\wt{X}_4)$ to denote 
the Kähler cone of $\wt{X}_4$. 
\begin{lem} 
The cone $\calk(\wt{X}_4)$ consists 
of those classes 
\begin{equation} \label{w.in.X4} \textstyle
[\wt\omega_4]= \lambda[H] - \sum_{i=0}^3 \wt\mu_i [\wt{E}_i] \in \sfh^2(\wt{X}_4;\rr)
\end{equation}
which satisfy
\begin{enumerate}[label=$\wt{(\arabic*)}$]
\item $[\wt\omega_4]^2= \lambda^2 - \sum_{i=0}^3 \wt\mu_i^2>0$;
\item \label{some.E}   
$\wt\mu_i>0$ for $i = 0,\ldots,3$ and $\wt\mu_0 + \wt\mu_i < \lambda$ for $i=1,2,3$;
\item $\wt\mu_0 - ( \wt\mu_1 + \wt\mu_2 + \wt\mu_3)>0$. 
\end{enumerate}
\end{lem}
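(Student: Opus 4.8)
The plan is to derive the description of $\calk(\wt X_4)$ from the Nakai–Moishezon criterion in its numerical form for the Kähler (ample) cone of a smooth projective surface: writing $\alpha=[\wt\omega_4]=\lambda[H]-\sum_{i=0}^3\wt\mu_i[\wt E_i]$, the class $\alpha$ is Kähler if and only if $\alpha^2>0$ and $\alpha\cdot[C]>0$ for every irreducible curve $C\subset\wt X_4$. The necessity of $\wt{(1)}$–$\wt{(3)}$ is the easy half: a Kähler class has positive square, giving $\wt{(1)}$, and positive area on every holomorphic curve. Since $\wt E_1,\wt E_2,\wt E_3$, the lines in the classes $[H]-[\wt E_0]-[\wt E_i]$, and $\Sigma$ are all represented by irreducible curves in our configuration, pairing $\alpha$ against them yields $\wt\mu_i>0$ $(i=1,2,3)$, $\lambda-\wt\mu_0-\wt\mu_i>0$, and $\wt\mu_0-(\wt\mu_1+\wt\mu_2+\wt\mu_3)>0$; adding the last inequality to $\wt\mu_1+\wt\mu_2+\wt\mu_3>0$ also gives $\wt\mu_0>0$, so $\wt{(2)}$ holds in full.

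For sufficiency I would first cut the infinitely many positivity conditions down to the negative curves. By the Hodge index theorem the intersection form on $\sfh^2(\wt X_4;\rr)$ has signature $(1,4)$, and $[H]$ is nef with $[H]^2=1$, since a general line of $\cp^2$ misses $\wt x_0$ and hence moves freely off the exceptional locus. Thus every nonzero effective class $C$ with $C^2\ge0$ satisfies $C\cdot[H]>0$ and so lies in the same nappe of the positive cone as $\alpha$ (note $\alpha\cdot[H]=\lambda>0$ under $\wt{(2)}$), and the light-cone inequality forces $\alpha\cdot C>0$ automatically. It therefore suffices to verify $\alpha\cdot[C]>0$ only for the irreducible curves $C$ of \emph{negative} self-intersection.

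To enumerate these I would exploit the ruled structure: the pencil of lines through $\wt x_0$ makes $\wt X_4$ a conic bundle $\pi\colon\wt X_4\to\pp^1$ with fibre class $[H]-[\wt E_0]$, negative section $\Sigma$ (self-intersection $-4$), positive section $H$, and exactly three reducible fibres, the $i$-th splitting as $\wt E_i$ together with the line in class $[H]-[\wt E_0]-[\wt E_i]$. Granting that the complete list of negative curves is then $\wt E_1,\wt E_2,\wt E_3$, the three lines $[H]-[\wt E_0]-[\wt E_i]$, and $\Sigma$, pairing $\alpha$ against exactly these seven classes reproduces conditions $\wt{(2)}$ and $\wt{(3)}$, and Nakai–Moishezon then certifies $\alpha$ as Kähler.

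The hard part is the completeness of this list — ruling out stray negative sections or multisections created when one blows up three points of $\mathbb F_1$. I would argue as follows. View $\wt X_4$ as the blow-up of $\mathbb F_1$ at $\wt x_1,\wt x_2,\wt x_3\in C_0=\wt E_0$, where $C_0$ (self-intersection $-1$) is the unique negative curve of $\mathbb F_1$. Any irreducible negative curve on $\wt X_4$ is one of the $\wt E_i$, or the proper transform $\wt D$ of an irreducible $D\subset\mathbb F_1$ with $\wt D^2=D^2-\sum_i m_i^2<0$, where $m_i=\mathrm{mult}_{\wt x_i}D$. Writing the class $D=\alpha C_0+\beta F$ with $\alpha=D\cdot F$, irreducibility gives $\beta\ge\alpha$ once $D\ne C_0$; moreover $m_i\le D\cdot F=\alpha$ and $\sum_i m_i\le D\cdot C_0=\beta-\alpha$, because the centres are smooth points of $C_0$ lying on distinct fibres. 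Hence $\sum_i m_i^2\le\alpha\sum_i m_i\le\alpha(\beta-\alpha)$, so $\wt D^2\ge\alpha(2\beta-\alpha)-\alpha(\beta-\alpha)=\alpha\beta>0$ for every $D$ other than $C_0$ and the fibres. The only negative proper transforms thus come from $C_0$ (yielding $\Sigma$) and the three fibres through the centres (yielding $[H]-[\wt E_0]-[\wt E_i]$), which confirms that the seven curves above are all of them.
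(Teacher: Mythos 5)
Your proof is correct, and its overall frame (easy necessity, then Nakai--Moishezon for sufficiency, both exploiting the ruling $\pr_4\colon \wt{X}_4\to H$ with fibre class $[F]=[H]-[\wt{E}_0]$ and reducible fibres $\wt{E}_i+\wt{E}_i'$) matches the paper's; but the key sufficiency step is genuinely different. You split the irreducible curves into those with $[C]^2\ge 0$, disposed of by the Hodge-index/light-cone argument, and the negative curves, which you classify completely --- the seven curves $\wt{E}_i$, $\wt{E}_i'$, $\Sigma$ --- via multiplicity estimates on the blow-up of $\ff_1$ (your inequalities $m_i\le D\cdot F$ and $\sum_i m_i\le D\cdot C_0$ are correct and are exactly where the work lies). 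The paper avoids both ingredients: for any irreducible $C\ne\Sigma$ with $d:=[C]\cdot[F]>0$ it writes $[C]=d[\Sigma]+m[F]-\sum_i n_i'[\wt{E}_i']$, where $n_i':=[C]\cdot[\wt{E}_i']\in[0,d]$ and $m\ge 4d$ because $[C]\cdot[\Sigma]\ge 0$, and then regroups this as $d[\Sigma]+(m-3d)[F]+\sum_i(d-n_i')[F]+\sum_i n_i'[\wt{E}_i]$, a combination with non-negative coefficients on which $[\wt\omega_4]$ is term-wise non-negative and strictly positive on the summand $d[\Sigma]$. So the paper gets positivity on all curves at once by pure intersection arithmetic, with no enumeration of negative curves and no Hodge index theorem; your route is longer, but it yields strictly more, namely the explicit wall structure of the K\"ahler cone: it is cut out by exactly the seven negative curves. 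Two small points to tighten in your write-up: you apply the numerical Nakai--Moishezon criterion directly to real classes, which requires the Campana--Peternell (or Buchdahl--Lamari) version of the criterion, whereas the paper sidesteps this by noting that the K\"ahler cone is open and convex, so it suffices to treat rational classes, which are dense and for which the classical criterion applies; and the letters $\alpha,\beta$ in $D=\alpha C_0+\beta F$ collide notationally with your use of $\alpha$ for the K\"ahler class, so one of them should be renamed.
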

\begin{proof}
Let us first introduce more notations. 
The pencil of lines passing through the point $\wt{x}_0 \in \cp^2$ yields the holomorphic ruling 
$\pr_1 \colon \wt{X}_1 \to H$ for which 
$\wt{E}_0$ is section of self-intersection number $(-1)$.
The fibers of $\pr_1$ are in the class $[F] := [H] - [\wt{E}_0]$.

We let $\pr_4 \colon \wt{X}_4 \to H$ to denote 
the composition of the contractions of $\wt{E}_i, i = 1,2,3$ from $\wt{X}_4$ with the ruling $\pr_1$.
While the generic fiber of $\pr_4 \colon \wt{X}_4 \to H$ is a smooth holomorphic sphere in the class $[F]$, three fibers of $\pr_4$ are singular; each of them consists of two holomorphic exceptional curves, $\wt{E}_i, \wt{E}^{\prime}_{i}, i = 1,2,3$. 
The homology class of 
$\wt{E}^{\prime}_{i}$ is 
$[\wt{E}_{i}'] = [F] - [\wt{E}_i] = 
[H] - [\wt{E}_0] - [\wt{E}_{i}], i = 1,2,3$.   
\smallskip%

Going back to the proof of the lemma, note that it is sufficient to do 
the rational classes $\sfh^2(\wt{X}_4;\qq)$, as $\calk(\wt{X}_4)$ is an open convex cone, in which rational points are dense. Recall that a class $\xi \in \sfh^2(\wt{X}_4;\qq)$ has a Kähler representative if and 
only if $\xi^2 > 0$ and $\int_{C} \xi > 0$ for each (irreducible) holomorphic curve $C$. (Note that $\sfh^{1,1}(\wt{X}_4) = \sfh^2(\wt{X}_4;\cc)$, so that every integral class is the Chern class for some holomorphic line bundle.) Let us show that the classes $[\wt\omega_4]$ provided by the lemma are indeed positive on holomorphic curves. Consider the following cases:
\begin{itemize}
\item If $C$ is $\Sigma$, then the positivity 
follows from ($\wt{3}$).
    
\item If $[C] \cdot [F] = 0$, then $C$ is either a regular or a singular fiber of $\pr_4$, in which case the positivity follows from ($\wt{2}$).
    
\item In the last, the most general case, we have $[C] \cdot [F] > 0$ and $C \neq \Sigma$.
\end{itemize}
Set $d:= [C] \cdot [F]$ and $n'_i:= [C] \cdot [\wt{E}^{'}_{i}]$. 
Then $0 \leq n'_i \leq d$, as $[F] = [\wt{E}_i] + [\wt{E}'_{i}]$. 
Thus, we have:
\begin{equation}\label{Sigma1} \textstyle
[C] = d[\Sigma] + m[F] - \sum _{i=1}^3 n'_i [\wt{E}^{'}_{i}].
\end{equation}
Since $[C] \cdot [\Sigma] > 0$, 
it follows that 
$m - 4\,d > 0$. Therefore, one can rewrite \eqref{Sigma1} as follows:
\begin{equation*} \textstyle
[C] = d[\Sigma] + (m-3d) [F] +  \sum _{i=1}^3 (d-n'_i)[F] 
+\sum _{i=1}^3 n'_i\,([F] -[\wt{E}^{'}_{i}]).
\end{equation*}
Clearly, $[\wt\omega_4]$ is non-negative on each summand and positive $d[\Sigma]$. \qed
\end{proof}



\input{bib-sympl-triangle.tex}

\end{document}